\documentclass[reqno,12pt]{amsart}
\usepackage{amssymb}
\usepackage{amsmath}
\usepackage{amsthm}
\usepackage{mathtools}
\usepackage{mathabx}
\usepackage{amsfonts}
\usepackage{xcolor}
\usepackage{graphicx}
\usepackage{tikz}
\usepackage{comment}
\usepackage{microtype}
\usepackage{enumitem}
\usepackage[
    colorlinks = true,, 
    linkcolor={blue},
	citecolor={blue},
	urlcolor={black!30!blue}
]{hyperref}
\usepackage[alphabetic, initials]{amsrefs}
\renewcommand{\MR}[1]{}

\title[Lyapunov Exponent for Gevrey Cocycles]{Continuity of Lyapunov Exponent for Quasi-Periodic Gevrey Cocycles}

\author[X.\ Wang]{Xueyin Wang}
\address{[X.\ Wang] Department of Mathematics, Texas A\&M University, College Station, TX 77843, USA}
\email{\href{mailto:xueyin@tamu.edu}{xueyin@tamu.edu}}

\theoremstyle{plain}
\newtheorem{theorem}{Theorem}[section]

\newtheorem{lemma}[theorem]{Lemma}
\newtheorem{proposition}[theorem]{Proposition}

\theoremstyle{definition}

\newtheorem{remark}[theorem]{Remark}

\numberwithin{equation}{section}

\begin{document}
\begin{abstract}
	It is shown that for the quasi-periodic cocycles in Gevrey space $G^{s}$ and subexponential Brjuno class frequency $\Omega(\eta)$, the Lyapunov exponent is continuous provided that $1<s+\eta<2$.
\end{abstract}

\maketitle	

\section{Introduction and results}
In this paper, we study the continuity of the Lyapunov exponent for one-frequency quasi-periodic $\mathrm{SL}(2,\mathbb{R})$ cocycles
\begin{equation*}
    (\alpha,A):\mathbb{T}\times \mathbb{R}^{2}\rightarrow \mathbb{T}\times \mathbb{R}^{2},\qquad (\theta, x)\mapsto (\theta+\alpha, A(\theta)x),
\end{equation*}
where $\alpha\in\mathbb{R}\setminus\mathbb{Q}$ and $A(\cdot): \mathbb{T}\to \mathrm{SL}(2,\mathbb{R})$. An important example is Schr\"odinger cocycle, where $A(\theta)=A_{E}^{V}(\theta)\coloneq\begin{pmatrix}
    E-V(\theta)&-1\\1&0
\end{pmatrix}$.

For $N\geqslant 1$, we write
\begin{equation*}
    A_{N}(\theta)=A(\theta+(N-1)\alpha)\cdots A(\theta+\alpha)A(\theta),
\end{equation*}
and define the finite-scale Lyapunov exponent by
\begin{equation*}
    L_{N}(\alpha,A)= \frac{1}{N}\int_{\mathbb{T}} \ln \|A_{N}(\theta)\| \, \mathrm{d}\theta.
\end{equation*}
By subadditivity, the Lyapunov exponent
\begin{equation*}
    L(\alpha,A)= \lim_{N\to\infty} L_{N}(\alpha,A)=\inf_{N\geqslant 1}L_{N}(\alpha,A)
\end{equation*}
exists.

The regularity of the Lyapunov exponent is a central problem in  dynamical systems and spectral theory. It is well-known that the regularity depends delicately on both the arithmetic properties of the frequency and the regularity of the cocycle \cite{youproblems}. We focus on the cocycles in Gevrey class. For $\rho>0$ and $s>1$,
let $G^{s}_{\rho}(\mathbb{T},*)$ be the space of $*$-valued functions
\begin{equation*}
    f(\theta)=\sum_{k\in\mathbb Z} \hat{f}_{k} e^{2\pi i k\theta}\quad \text{with}\quad \|f\|_{s,\rho}\coloneq \sum_{k\in\mathbb{Z}} |\hat{f}_{k}|e^{\rho|2\pi k|^{\frac{1}{s}}}<\infty.
\end{equation*}

We now introduce the frequency class used in this paper. Let $p_n/q_n$ be the continued fraction approximants of $\alpha$.
For $0<\eta<1$, we say that $\alpha\in\Omega(\eta)$ if there exists $C_{\alpha}>0$ such that for all $n$,
\begin{equation}\label{brjuno}
    \ln q_{n+1}\leqslant C_{\alpha} q_{n}^{\eta}.
\end{equation}
We call $\Omega(\eta)$ a subexponential Brjuno class. This condition is weaker than any Diophantine condition, but it is stronger than the usual Brjuno condition.

The main result of this paper is the following.

\begin{theorem}\label{mainthm}
    Fix $\rho>0$, $1<s<2$, $0<\eta<2-s$ and $\alpha\in \Omega(\eta)$. Then the map 
    \begin{equation*}
        A\mapsto L(\alpha,A)
    \end{equation*}
    is continuous in $G^{s}_{\rho}(\mathbb{T},\mathrm{SL}(2,\mathbb{R}))$.
\end{theorem}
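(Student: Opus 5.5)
We follow the Bourgain--Jitomirskaya scheme for continuity of the Lyapunov exponent: a large deviation theorem (LDT) with bounds uniform in a neighbourhood of $A$, combined with the avalanche principle (AP). If $L(\alpha,A)=0$, continuity at $A$ is automatic from upper semicontinuity. Indeed $L=\inf_N L_N$ and each $L_N$ is continuous in $A$ for the $G^s_\rho$ norm, which dominates the sup norm. So we may assume $L(\alpha,A)=\gamma>0$, and we must show $L(\alpha,B)\ge \gamma-\varepsilon$ for $\|B-A\|_{s,\rho}$ small.

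The main step is a uniform LDT at a sequence of scales $N$ adapted to the denominators $q_n$:
\[
\mathrm{mes}\{\theta: |\tfrac1N\ln\|B_N(\theta)\|-L_N(\alpha,B)|>N^{-\tau}\}<e^{-N^{\sigma}}
\]
for all $B$ near $A$ and some $\tau,\sigma>0$.

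Since $B$ is only Gevrey, $u_N=\frac1N\ln\|B_N\|$ is not subharmonic on a strip. We handle this by truncation. Write $B=B^{(K)}+R_K$, where $B^{(K)}$ is the Fourier truncation at frequency $K$. Then $\|R_K\|_\infty\lesssim e^{-\rho(2\pi K)^{1/s}}$, and $B^{(K)}$ is a trigonometric polynomial. On the strip $|\operatorname{Im}\theta|<h$ it satisfies $\|B^{(K)}\|\le C e^{2\pi K h}$. The function $u_N^{(K)}=\frac1N\ln\|B^{(K)}_N\|$ is therefore subharmonic on the strip, with sup norm $O(Kh)$, and as a function on $\mathbb T$ its Riesz mass gives Fourier decay $|\widehat{u}_N^{(K)}(k)|\lesssim Kh\,\cdot\,(h|k|)^{-1}$ up to constants. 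The usual Bourgain-type argument then applies: average over shifts by $\alpha$ along $q_n$, and split Fourier modes into $|k|<q_{n}$ and large $|k|$. This yields a deviation estimate for $u^{(K)}_N$ with loss depending on the ratio $K/(hN)$ and on $\ln q_{n+1}$. By telescoping, the truncation error satisfies $\|B_N-B_N^{(K)}\|\le N C^N e^{-\rho(2\pi K)^{1/s}}$. This is negligible compared to $e^{N(\gamma/2)}$ provided $K^{1/s}\gg N$, i.e. $K\sim N^{s}$. A problem remains at points where $\|B_N\|$ is small, but the LDT only requires control of $\ln\|B_N\|$ in the lower direction away from a small set, and $\|B_N\|\ge 1$ handles this.

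Taking $K\sim N^s$ and $h\sim N^{-\kappa}$, the strip bound gives $\sup u^{(K)}_N\lesssim N^{s-1-\kappa}$, which must stay bounded. The shift-averaging requires $N$ comparable to a power of $q_n$, and the tail of large modes requires $\ln q_{n+1}\lesssim q_n^\eta$ to be beaten by a subexponential gain $N^{2-s-}$. I expect this arithmetic balancing of $K$, $h$, $N$ and $q_n$ to be the main obstacle. It is exactly where the hypothesis $s+\eta<2$ should be used: the exponent of the subexponential deviation set is roughly $N^{2-s}$ versus $q_n^\eta$.

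Given the LDT at two scales $N$ and $N'=e^{cN^\sigma}$-ish (or polynomially larger), we apply the AP to $B_{N'}$ split into blocks of length $N$. This gives the standard inequality
\[
|L_{N'}(B)+L_N(B)-2L_{2N}(B)|\lesssim e^{-cN^\sigma}+N/N'.
\]
The constants are uniform in $B$ near $A$ because the LDT is uniform. Iterating along the scales gives
\[
|L(B)-2L_{2N}(B)+L_N(B)|\le \delta(N),
\]
with $\delta(N)\to0$ uniformly in $B$. Now fix $N$ large so that $\delta(N)<\varepsilon/3$. By continuity of $L_N$ and $L_{2N}$ in $B$, taking $B$ close to $A$ gives $|L(B)-L(A)|<\varepsilon$. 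The AP hypotheses (large norms at scale $N$) are ensured by $\gamma>0$ together with the LDT at the initial scale, since $L_N(B)\ge L(A)-o(1)$ for $B$ close and $N$ fixed.

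The scales must avoid gaps in the sequence $q_n$. This is handled by choosing $N$ between $q_n$ and $q_{n+1}$ appropriately; the $\Omega(\eta)$ condition ensures the next admissible scale is at most subexponentially larger, which the AP tolerates.
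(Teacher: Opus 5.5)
Your architecture matches the paper's: upper semicontinuity when $L(\alpha,A)=0$, a large deviation estimate uniform near $A$, the avalanche principle giving $|L_{N'}+L_N-2L_{2N}|\lesssim e^{-cq^{\gamma}}+N/N'$, a chain of scales making $|L(B)+L_{N_0}(B)-2L_{2N_0}(B)|$ small uniformly in $B$, and continuity of $L_{N_0},L_{2N_0}$. There is, however, a genuine gap. The only step where $s+\eta<2$ enters is left as ``the main obstacle''. The sentence meant to dispose of it, that the next admissible scale is at most subexponentially larger ``which the AP tolerates'', is not true without further input.

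The Gevrey LDT (Theorem~\ref{ldt}) uses a single $q$ with $|\alpha-a/q|<q^{-2}$. So the arithmetic of $\alpha$ enters only when chaining scales, not inside the LDT as your plan suggests. The LDT holds only in a window $C_1q^{\sigma}<N<C_2q^{\sigma_1}$, with exceptional measure $e^{-cq^{\gamma}}$; here $\gamma$ is the LDT exponent, not your $L(\alpha,A)$. A step $N_{s+1}=m_{s+1}N_s$ therefore needs three things at once:
\begin{itemize}
\item $m_{s+1}<e^{\frac{c}{10}\tilde q_s^{\gamma}}$, so the union of $m_{s+1}$ exceptional sets stays small;
\item $N_{s+1}$ lying in the window of a later denominator;
\item $m_{s+1}$ large enough that the errors $2L_{N_s}/m_{s+1}$ are summable.
\end{itemize}
Summability is also what propagates the AP hypotheses $L_{N_j}>90\kappa$ and $L_{2N_j}>\frac{9}{10}L_{N_j}$ to all later scales. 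You check only the large-norm hypothesis, and only at the initial scale. The Brjuno condition bounds the next denominator only by $\exp(C_\alpha q^{\eta})$ in terms of the previous one $q$. Such a jump fits under $e^{\frac{c}{10}\tilde q_s^{\gamma}}$ only if $\gamma$ beats $\eta$. With the parameters of Remark~\ref{paraold} one only knows $\gamma<1-\delta$ for some $\delta\in(s-1,1)$, so $\gamma$ may be below $\eta$ and the windows need not connect.

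The missing idea is to retune the LDT parameters and then choose the scales accordingly. Take $\delta\in(s-1,1-\eta)$, which is nonempty exactly because $s+\eta<2$. Let $\sigma\downarrow 1$ with $p=\lfloor\delta\sigma/(\sigma-1)\rfloor+1$. Then $\gamma\to 1-\delta>\eta$ and $\sigma_1/\sigma\to 1$, so $\eta\sigma_1<\gamma\sigma$ (Lemma~\ref{paranew}). Fix $\zeta\in(\sigma_1/\sigma,\gamma/\eta)$ and let $\tilde q_{s+1}$ be the first $q_j>\tilde q_s^{\zeta}$. Since $q_{j-1}\le\tilde q_s^{\zeta}$, the Brjuno condition gives $\tilde q_{s+1}\le\exp(C_\alpha\tilde q_s^{\zeta\eta})<e^{\frac{c}{10\sigma}\tilde q_s^{\gamma}}$. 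With $N_{s+1}\approx\tilde q_{s+1}^{\sigma}$ this yields $\tilde q_s^{\zeta\sigma-\sigma_1}<m_{s+1}<e^{\frac{c}{10}\tilde q_s^{\gamma}}$ (Lemma~\ref{induction}). The scale ratios grow only polynomially, so the errors decay only polynomially. They are still summable because $\tilde q_s\ge\tilde q_0^{\zeta^{s}}$, which gives $|L(B)+L_{N_0}(B)-2L_{2N_0}(B)|<\tilde q_0^{-c'}$ uniformly near $A$.

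Separately, if you intend to reprove the LDT rather than cite it, your sketch has a slip that matters. The crude bound $\|B^{(K)}\|\le Ce^{2\pi Kh}$ gives $\sup u_N^{(K)}\lesssim 1+Kh$, not $N^{s-1-\kappa}$, because the factor $N$ from the product cancels the $1/N$. With $K\sim N^{s}$ this forces $h\lesssim N^{-s}$ and a Fourier loss of order $N^{s}$. That loss is larger than $N$ and makes the shift-averaging argument useless. You must use the Gevrey decay of $\hat B_k$, which gives $\|B^{(K)}\|=O(1)$ on a strip of width $h\sim K^{1/s-1}\sim N^{1-s}$. That is the source of the $N^{s-1}$ loss behind your heuristic gain $N^{2-s}$.
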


Let us compare our result with some known results. In the analytic setting, \cite{MR1933451} proved the continuity of the Lyapunov exponent for one-frequency quasi-periodic Schr\"odinger cocycles for arbitrary irrational frequencies. This result was later extended to general $\mathrm{SL}(2,\mathbb{R})$ cocycles in \cite{MR2563096}. In contrast, in the non-analytic setting, a surprising result due to \cite{MR3127804} shows that the Lyapunov exponent may be discontinuous in the $C^{\infty}$ topology. This dichotomy naturally motivates the study of the regularity of the Lyapunov exponent for cocycles in Gevrey classes \cites{MR2108112,MR3291922,MR4477429,powell2025large, MR4811662}, which provide a natural intermediate regularity between the analytic and $C^{\infty}$ topologies.

The regularity index $s$ of the Gevrey class $G^{s}$ plays a key role in the study of the Lyapunov exponent. For $1<s<2$, \cite{MR2108112} first proved that the Lyapunov exponent is continuous for Schr\"odinger cocycles with strong Diophantine ($SDC$) frequencies. Here, $\alpha$ is said to be $SDC$ if there exist $C_{\alpha}>0$ and $\tau>1$ such that
\begin{equation*}
    q_{n+1}\leqslant C_{\alpha} q_{n}(\ln q_{n})^{\tau}.
\end{equation*}
Later, \cite{MR4477429} obtained corresponding continuity results for $\mathrm{SL}(2,\mathbb{R})$ cocycles and Diophantine ($DC$) frequencies, which satisfy
\begin{equation*}
    q_{n+1}\leqslant C_{\alpha}q_{n}^{\tau}
\end{equation*}
for some $C_{\alpha}>0$ and $\tau>1$. Since $SDC\subseteq DC\subseteq \Omega(\eta)$ for any $\eta>0$, our Theorem \ref{mainthm} covers all frequencies considered in \cites{MR2108112,MR4477429} when $1<s<2$.

In contrast, one cannot expect continuity for Gevrey cocycles when $s>2$. The first counterexample was constructed in \cite{ge2021transition}. More precisely, they proved that for any bounded type frequency, there exists a Schr\"odinger cocycle whose corresponding Lyapunov exponent is discontinuous. Here, $\alpha$ is called bounded type if there is $C_{\alpha}>0$ such that
\begin{equation*}
    q_{n+1}\leqslant C_{\alpha}q_{n}.
\end{equation*}
Recently, \cite{liang2025discontinuity} further constructed discontinuity examples for general $\mathrm{SL}(2,\mathbb{R})$ cocycles covering all $\alpha\in SDC$. Since the $SDC$ class has full Lebesgue measure, the result in \cite{liang2025discontinuity} reveals that $s=2$ is a critical transition point of continuity for almost all frequencies.

For the Schr\"odinger cocycle, we denote $L(\alpha,E)\coloneq L(\alpha,A_{E}^{V})$. Then we have the following corollary.
\begin{theorem}\label{corosc}
    Fix $\rho>0$, $1<s<2$, $0<\eta<2-s$, $\alpha\in \Omega(\eta)$, and $V\in G^{s}_{\rho}(\mathbb{T},\mathbb{R})$. Then $L(\alpha,E)$ is continuous with respect to $E$.
\end{theorem}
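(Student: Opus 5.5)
Theorem \ref{corosc} follows from Theorem \ref{mainthm}. The plan is to show that the energy map $E\mapsto A_E^V$ is continuous into $G^{s}_{\rho}(\mathbb{T},\mathrm{SL}(2,\mathbb{R}))$, and then compose with $A\mapsto L(\alpha,A)$.

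First, we check that $A_E^V$ lies in the relevant space. Its entries are $E-V(\theta)$, $-1$, $1$ and $0$. The constant entries have only the zero Fourier mode, so their $\|\cdot\|_{s,\rho}$ norms are finite; for example $\|1\|_{s,\rho}=1$. The entry $E-V$ differs from $-V$ by a constant, which changes only $\hat f_0$, so $\|E-V\|_{s,\rho}\leqslant |E|+\|V\|_{s,\rho}<\infty$. Since $V$ is real-valued, $A_E^V(\theta)$ is a real matrix, and its determinant is $(E-V)\cdot 0-(-1)\cdot 1=1$. Hence $A_E^V\in G^{s}_{\rho}(\mathbb{T},\mathrm{SL}(2,\mathbb{R}))$ for every $E\in\mathbb{R}$.

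Second, we check continuity in $E$. The difference $A_{E}^V-A_{E'}^V$ has a single nonzero entry, the constant $E-E'$ in the top-left corner. Its $G^{s}_{\rho}$ norm, with the matrix norm taken entrywise or with any equivalent convention, is therefore a constant multiple of $|E-E'|$. So $E\mapsto A_E^V$ is Lipschitz, and in particular continuous, into $G^{s}_{\rho}$.

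Third, Theorem \ref{mainthm} applies with the same $\rho$, $s$, $\eta$ and $\alpha$, and gives continuity of $A\mapsto L(\alpha,A)$ on $G^{s}_{\rho}(\mathbb{T},\mathrm{SL}(2,\mathbb{R}))$. The composition $E\mapsto L(\alpha,A_E^V)=L(\alpha,E)$ is then continuous. The only point needing care is that the topology on matrix-valued $G^s_\rho$ functions is the one induced entrywise by $\|\cdot\|_{s,\rho}$, which matches the usage in Theorem \ref{mainthm}; with that convention there is no real obstacle.
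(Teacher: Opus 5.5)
Your proposal is correct and matches the paper, which states Theorem \ref{corosc} as an immediate corollary of Theorem \ref{mainthm}: $E\mapsto A_E^V$ is a continuous (indeed Lipschitz) map into $G^{s}_{\rho}(\mathbb{T},\mathrm{SL}(2,\mathbb{R}))$, and you compose it with the continuity of $A\mapsto L(\alpha,A)$. Your checks of membership, the determinant, and the bound $\|A_E^V-A_{E'}^V\|_{s,\rho}\leqslant C|E-E'|$ supply the routine details the paper leaves implicit.
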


Both Theorem \ref{mainthm} and Theorem \ref{corosc} imply the competition between the smoothness of the cocycle and the arithmetic of the frequency. A similar phenomenon was also observed by \cite{liang2025discontinuity}, where the authors demonstrated that lower smoothness combined with rapid denominator growth can result in discontinuity (see Theorems 1.3--1.5 in \cite{liang2025discontinuity}).
Our Theorem \ref{mainthm} and Theorem \ref{corosc} quantitatively capture this competition, implying that the larger $s$ is (i.e., the lower the smoothness), the smaller $\eta$ must be (i.e., the slower the denominator growth) to ensure the continuity of the Lyapunov exponent.

Finally, we briefly describe the idea of the proof. The starting point is the large deviation theorem for Gevrey cocycles, which is available in a scale window $C_1q_{s}^\sigma<N<C_2q_{s}^{\sigma_1}$. In the Diophantine case, the continued fraction denominators grow at most polynomially. To connect these scale windows, the multi-scale induction developed in \cite{MR4477429} relies on choosing consecutive scales with a subexponential growth ratio, specifically $N_{s+1}/N_{s} > \exp(cq_{s}^{c})$ (see \cite{MR4477429}*{Eq.~(8.30)}). This rapid growth subsequently yields a subexponentially decaying error bound between the Lyapunov exponent and its finite-scale approximation. However, for $\alpha\in\Omega(\eta)$, such subexponential estimates are invalid because the gaps between consecutive denominators can be significantly larger.

The novelty of our approach lies in a refined multi-scale induction scheme that successfully bridges these larger gaps. Our key observation is that one can delicately choose the parameters in the Gevrey large deviation theorem so that the Brjuno condition \eqref{brjuno} still allows the scale windows to be connected. Instead of forcing a subexponential bound, we establish a polynomial growth bound for the successive scale ratios: $N_{s+1}/N_{s} > q_{s}^{c}$. Consequently, as demonstrated in Theorem \ref{approx}, this slower growth leads to an approximation error with polynomial decay, rather than subexponential decay. Nevertheless, this polynomial error remains sufficient to establish the desired continuity. This is because our final argument ultimately only requires that $L(\alpha,A)$ be approximated by a fixed finite-scale expression, $2L_{2N_0}(\alpha,A)-L_{N_0}(\alpha,A)$, with an error tending to zero.


\section{Large deviation theorem}

Let us recall the following large deviation theorem. For simplicity, we suppress the explicit dependence on $\alpha$ in the $L_{N}(\alpha,A)$ and $L(\alpha,A)$ since $\alpha\in\Omega(\eta)$ is fixed.  In the notation of \cite{MR4477429}, our $G^{s}_{\rho}$ corresponds to $G^{\nu}_{\rho}$ with $\nu=1/s\in (1/2,1)$.
\begin{theorem}[\cite{MR4477429}]\label{ldt}
    Let $\rho>0$, $1<s<2$, $0<\kappa<1$. Assume $A\in G^{s}_{\rho}(\mathbb{T},\mathrm{SL}(2,\mathbb{R}))$, and 
    \begin{equation*}
        \bigg|\alpha-\frac{a}{q} \bigg|<\frac{1}{q^{2}},  \quad \gcd(a,q)=1.
    \end{equation*}
    Then there exist $c, C_{i}(\kappa)>0, i=1,2$, $\sigma_{1}>\sigma>1>\gamma>0$ and $q_{0}(\kappa,\rho,s, \|A\|_{s,\rho})>0$ such that for $q\geqslant q_{0}$ and $C_{1}(\kappa) q^{\sigma}<N<C_{2}(\kappa)q^{\sigma_{1}}$,
    \begin{equation*}
        \operatorname{meas}\bigg\{\theta\in\mathbb{T}: \bigg|\frac{1}{N}\ln \|A_{N}(\theta)\|-L_{N}(A)\bigg|>\kappa \bigg\}<e^{-cq^{\gamma}}.
    \end{equation*}
\end{theorem}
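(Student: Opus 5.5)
This theorem is quoted from \cite{MR4477429}, so the plan is to reconstruct the standard Bourgain--Goldstein--Schlag route to a large deviation estimate, adapted to Gevrey regularity through analytic truncation.

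\textbf{Step 1: analytic approximation.} Fix a scale $N$ and truncate the Fourier series of $A$ at $|k|\leqslant K$, obtaining a trigonometric polynomial $\tilde A$. Since
\begin{equation*}
    |\hat A_k|\leqslant \|A\|_{s,\rho}e^{-\rho|2\pi k|^{1/s}},
\end{equation*}
we have $\|A-\tilde A\|_{C^0}\lesssim e^{-\rho' K^{1/s}}$. We may replace $\tilde A$ by a nearby $\mathrm{SL}(2,\mathbb{R})$-valued map, e.g.\ by normalising the determinant. Choosing $K$ with $K^{1/s}\gg N$ gives $\|A_N-\tilde A_N\|\leqslant e^{CN}e^{-\rho'K^{1/s}}$, which is negligible.

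The function $\tilde A$ extends analytically to the strip $|\operatorname{Im} z|<h$. Since $|\hat A_k|e^{2\pi|k|h}\leqslant e^{-\rho(2\pi k)^{1/s}+2\pi kh}$, we need $h\sim K^{1/s-1}$. Thus $u(z)=\frac1N\ln\|\tilde A_N(z)\|$ is subharmonic and bounded by $C$ on the strip of width $h$.

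\textbf{Step 2: the BGS scheme.} Apply the standard analytic argument with constants tracking $h$.
\begin{enumerate}[label=(\roman*)]
    \item The Riesz representation gives the Fourier decay $|\hat u(k)|\lesssim 1/(h|k|)$.
    \item Almost invariance, $|u(\theta)-u(\theta+\alpha)|\lesssim 1/N$, yields
    \begin{equation*}
        \Big|u-\frac1R\sum_{j<R}u(\cdot+j\alpha)\Big|\lesssim R/N.
    \end{equation*}
    \item Split the Fourier series of the average at $|k|\sim K_0$. For $|k|<q$, the condition $|\alpha-a/q|<q^{-2}$ gives $\|k\alpha\|\gtrsim 1/q$, so the averaged kernel satisfies $|\frac1R\sum_{j<R}e^{2\pi ik j\alpha}|\lesssim q/R$. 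The tail is controlled in $L^2$ via the $1/(h|k|)$ decay.
\end{enumerate}
This gives a BMO bound, and John--Nirenberg then upgrades it to the exponential deviation bound $e^{-cq^{\gamma}}$ for deviations exceeding $\kappa$.

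\textbf{Step 3: bookkeeping of exponents.} We need:
\begin{itemize}
    \item $R/N<\kappa$;
    \item $R\gg q$, so that the averaging is effective;
    \item $K^{1/s}\gtrsim N$ for the truncation error;
    \item the tail of order $1/(hK_0)$ small, with $K_0\lesssim q$.
\end{itemize}
Since $h\sim N^{1-s}$, the tail requirement forces $N^{s-1}\ll q^{1-\epsilon}$, i.e.\ $N<q^{\sigma_1}$ with $\sigma_1<1/(s-1)$. Meanwhile $N>q^{\sigma}$ with $\sigma>1$ is needed so that $R$ sits between $q$ and $\kappa N$.

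\textbf{Main obstacle.} The hardest part is this exponent bookkeeping, and in particular checking that the window is nonempty, i.e.\ $1<\sigma<\sigma_1<1/(s-1)$. This is exactly where $s<2$ enters: the window is nonempty precisely because $1/(s-1)>1$. The same computation fixes $\gamma$, and it comes from a sufficiently sharp deviation estimate at the strip width $h$.
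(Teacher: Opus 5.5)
\paragraph{Review.} Your architecture is the standard one: truncation at $K\sim N^{s}$, strip width $h\sim N^{1-s}$, the Riesz-mass bound $|\hat u(k)|\lesssim (h|k|)^{-1}$, almost invariance, and shift averages. The paper gives no proof of this statement and only records the resulting exponents in Remark \ref{paraold}. However, the step you call the main obstacle is not carried out correctly, and as written it does not produce $e^{-cq^{\gamma}}$.

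\emph{The high-frequency tail.} Write $F_R(\theta)=\frac1R\sum_{j<R}e^{2\pi i j\theta}$. Splitting at $K_0\lesssim q$ and putting the tail in $L^2$ gives at best $\|\mathrm{tail}\|_{L^2}\lesssim (hq)^{-1}=N^{s-1}/q$. This already uses the averaging gain inside blocks; the bare decay $(h|k|)^{-1}$ gives only $(h\sqrt{K_0})^{-1}$, not $(hK_0)^{-1}$. Chebyshev then gives only a polynomially small exceptional set. Moreover, an $L^2$ bound gives no BMO bound: the a priori BMO control from the Riesz mass is only $\|u\|_{BMO}\lesssim h^{-1}=N^{s-1}\gg 1$, which is useless for John--Nirenberg.

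The missing idea is to run the $L^\infty$ small-divisor estimate up to exponentially large frequencies. For $k$ in a block $\ell q\le |k|<(\ell+1)q$, every interval of length $1/q$ contains $O(1)$ of the points $k\alpha$. Hence
\[
\sum_{\ell q\le |k|<(\ell+1)q}|\hat u(k)||F_R(k\alpha)|\lesssim (h\ell q)^{-1}\bigl(1+q\log q/R\bigr).
\]
Summing over $\ell\le K_0/q$ with $K_0=e^{q^{\gamma}}$ and $R\gg q\log q$ gives $\lesssim q^{\gamma}/(hq)\approx N^{s-1}q^{\gamma-1}$. Only $|k|>e^{q^{\gamma}}$ is treated in $L^2$, and Chebyshev bounds its exceptional set by $\lesssim \kappa^{-2}h^{-2}e^{-q^{\gamma}}$. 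This is exactly where the constraint $\gamma<1-(s-1)\sigma_1$ comes from; compare $\gamma=1-\delta\sigma_1$ with $\delta>s-1$ in Remark \ref{paraold}, which is what Lemma \ref{paranew} exploits. No BMO step is needed. A John--Nirenberg route is possible, but then the BMO smallness of the shift average must be proved directly from the Riesz representation, not inferred from an $L^2$ tail bound.

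\emph{The low frequencies.} This step has a similar hidden problem. Combining $|\hat u(k)|\lesssim (h|k|)^{-1}$ with $|F_R(k\alpha)|\lesssim q/R$ only bounds that part by $q\log q/(hR)$. Making this small forces $R\gg qN^{s-1}$, and with $R<\kappa N$ this needs $N^{2-s}\gg q$. Against $N\ll q^{1/(s-1)}$, the window is then empty once $s\geqslant 3/2$.

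Your requirement ``$R\gg q$'' is correct only if you also do two things:
\begin{itemize}
\item use $|\hat u(k)|\le \|u\|_{L^1(\mathbb{T})}\lesssim 1$, which is valid since $0\le u\le C$ on $\mathbb{T}$;
\item pair the weights $\min\{1,(h|k|)^{-1}\}$ with the $1/q$-separation of the points $k\alpha$.
\end{itemize}
Together these give $\lesssim q\log(1/h)/R$.

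Note also that $\|k\alpha\|\ge 1/(2q)$ is guaranteed only for $0<|k|<q/2$. For $q/2\le |k|<q$ it can be of order $q^{-2}$, so those modes belong in the block estimate.

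With both corrections and, e.g., $R\approx q^{(1+\sigma)/2}$, every error term is $o(1)$ uniformly for $q^{\sigma}\lesssim N\lesssim q^{\sigma_1}$. You then obtain the statement for any $1<\sigma<\sigma_1<1/(s-1)$ and $0<\gamma<1-(s-1)\sigma_1$.
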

\begin{remark}
    The constants are uniform in a neighborhood of $A$ in $G^{s}_{\rho}$.
\end{remark}
\begin{remark}\label{paraold}
    In \cite{MR4477429}*{Sect 8.1.3}, the parameters $\sigma_{1},\sigma,\gamma$ in Theorem \ref{ldt} are chosen as follows: Let 
    \begin{equation}\label{dold}
        \delta\in (s-1,1).
    \end{equation}
    Let $\sigma>1$ and $p\in\mathbb{N}$ such that
    \begin{equation}\label{p1}
        1<\sigma<\frac{1}{\delta}\quad \text{and}\quad \frac{\delta\sigma}{\sigma-1}<p<\frac{1}{\sigma-1}.
    \end{equation}
    Define
    \begin{equation}\label{p2}
        \gamma=1+p(1-\sigma)\quad \text{and}\quad \sigma_{1}=\frac{p(\sigma-1)}{\delta}.
    \end{equation}
\end{remark}

The parameter choices detailed in Remark \ref{paraold} are designed for the Avalanche Principle in the Diophantine setting. Relaxing this frequency condition to the subexponential Brjuno class \eqref{brjuno} requires confining the parameters to a strictly tighter regime. More precisely, we replace \eqref{dold} with (because $s+\eta<2$)
\begin{equation*}
    \delta\in (s-1,1-\eta).
\end{equation*}
Then $\sigma,p,\gamma,\sigma_{1}$ are chosen as in \eqref{p1} and \eqref{p2}. 

\begin{lemma}\label{paranew}
    For any $\delta \in (s-1, 1-\eta)$, there exist parameters $\sigma, p, \gamma,$ and $\sigma_{1}$ satisfying \eqref{p1}, \eqref{p2}, and
    \begin{equation}\label{gap}
        \eta \sigma_{1} < \gamma\sigma.
    \end{equation}
\end{lemma}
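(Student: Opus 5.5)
The plan is to reparametrize the constraints through the single quantity $t\coloneq p(\sigma-1)$. This turns \eqref{p1}, \eqref{p2} and \eqref{gap} into simple conditions on $t$, with $\sigma$ serving only as a small perturbation of $1$. By \eqref{p2}, $\gamma=1-t$ and $\sigma_1=t/\delta$. Multiplying the double inequality in \eqref{p1} by $\sigma-1>0$ shows it is equivalent to
\begin{equation*}
\delta\sigma<t<1 .
\end{equation*}
Condition \eqref{gap} reads $\eta t/\delta<(1-t)\sigma$.

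The key step is the limiting case $\sigma=1$, $t=\delta$. There \eqref{gap} becomes $\eta<1-\delta$, which is exactly the hypothesis $\delta<1-\eta$. By continuity I first fix $t_0\in(\delta,1)$, close enough to $\delta$ that
\begin{equation*}
\frac{\eta t_0}{\delta}<1-t_0 .
\end{equation*}
This is possible because the left side minus the right side is continuous in $t_0$ and negative at $t_0=\delta$. Note that this choice does not involve $\sigma$ at all.

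The only potential obstacle is that $p$ must be an integer. I handle it by letting $p$ be free and defining $\sigma$ from it, namely $\sigma\coloneq1+t_0/p$, so that $t=p(\sigma-1)=t_0$ exactly. Taking $p\in\mathbb N$ large enough gives the following.

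\begin{enumerate}[label=(\roman*)]
\item $\sigma<1/\delta$, since $\sigma\to1<1/\delta$ as $p\to\infty$.
\item $\delta\sigma=\delta(1+t_0/p)<t_0$, since $\delta<t_0$. This is the left inequality in \eqref{p1}.
\item The right inequality $p<1/(\sigma-1)=p/t_0$ holds because $t_0<1$.
\end{enumerate}

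With $\gamma=1-t_0\in(0,1)$ and $\sigma_1=t_0/\delta$, item (ii) gives $\sigma_1>\sigma>1$, as required in Theorem \ref{ldt}. Finally, using $\sigma>1$,
\begin{equation*}
\eta\sigma_1=\frac{\eta t_0}{\delta}<1-t_0<(1-t_0)\sigma=\gamma\sigma ,
\end{equation*}
which is \eqref{gap}.
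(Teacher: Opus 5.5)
Your proof is correct and rests on the same observation as the paper's: in the limiting configuration $\sigma\to1$, $p(\sigma-1)\to\delta$, condition \eqref{gap} reduces to $\eta<1-\delta$, which is exactly the hypothesis. The only difference is the order of choices. The paper fixes $\sigma$ near $1$, takes $p=\lfloor\delta\sigma/(\sigma-1)\rfloor+1$ and passes to the limit; you instead fix $t_0=p(\sigma-1)$ strictly between $\delta$ and $\delta/(\delta+\eta)$ (your continuity condition is equivalent to this) and then set $\sigma=1+t_0/p$ with $p$ large, which makes every inequality explicit and avoids the limiting step.
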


\begin{proof}
    For $\sigma>1$, we choose the integer
    \begin{equation*}
        p=\bigg\lfloor \frac{\delta\sigma}{\sigma-1} \bigg\rfloor+1.
    \end{equation*}
    Note that as  $\sigma \to 1$, we have $p(\sigma-1)<1$ and $p(\sigma-1)\to \delta$. Consequently, it follows that
    \begin{equation*}
        \frac{\sigma_{1}}{\sigma} = \frac{p(\sigma-1)}{\delta \sigma} \to 1 \quad \text{and} \quad \gamma = 1+p(1-\sigma) \to 1-\delta.
    \end{equation*}
    Since $\eta < 1-\delta$, it follows that \eqref{gap} holds provided $\sigma$ is chosen sufficiently close to $1$.
\end{proof}

\section{Applications of avalanche principle}
In this section, we establish the error estimate on two scales.
\begin{theorem}\label{map}
    Suppose that $|\alpha - a/q| < 1/q^{2}$ with $\gcd(a,q) = 1$. Let $q \geqslant q_{0}$ and $N$ satisfying $C_{1}(\kappa)q^{\sigma} < N <2N< C_{2}(\kappa)q^{\sigma_{1}}$ be as in Theorem \ref{ldt}. Further assume that $L_{N}(A) > 90\kappa > 0$ and $L_{2N}(A) > \frac{9}{10}L_{N}(A)$. Then, for sufficiently large $q_{0}$ and for any $N'$ such that $N \mid N'$ and $m \coloneq N'/N < e^{\frac{c}{10}q^{\gamma}}$, we have
    \begin{equation*}
        |L_{N'}(A) + L_{N}(A) - 2L_{2N}(A)| < e^{-\frac{c}{2}q^{\gamma}} + \frac{2L_{N}(A)}{m}.
    \end{equation*}
\end{theorem}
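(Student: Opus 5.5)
We run the standard Avalanche Principle (AP) argument of Goldstein--Schlag/Bourgain type, as in \cite{MR4477429}, with the matrices of length $N$ as building blocks.

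\textbf{Setup.} Write $N' = mN$ and, for $\theta\in\mathbb{T}$ and $1\le j\le m$, set $B_j(\theta)=A_N(\theta+(j-1)N\alpha)$, so that $A_{N'}(\theta)=B_m(\theta)\cdots B_1(\theta)$. The AP (in the form: if $\|B_j\|\ge\mu\ge m$ and $|\ln\|B_{j+1}\|+\ln\|B_j\|-\ln\|B_{j+1}B_j\||<\frac12\ln\mu$, then $|\ln\|B_m\cdots B_1\|+\sum_{j=2}^{m-1}\ln\|B_j\|-\sum_{j=1}^{m-1}\ln\|B_{j+1}B_j\||<C m/\mu$) will be applied with $\mu=e^{N(L_N-\kappa)}$. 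Note $N>C_1q^\sigma$ with $\sigma>1>\gamma$, so $\mu\ge e^{80\kappa N}$ is much larger than $m<e^{\frac{c}{10}q^\gamma}$.

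\textbf{Step 1 (good set).} By Theorem \ref{ldt}, applied at scales $N$ and $2N$ (both in the admissible window), the sets
\[
\{\theta:|\tfrac1N\ln\|A_N(\theta)\|-L_N|>\kappa\},\qquad \{\theta:|\tfrac1{2N}\ln\|A_{2N}(\theta)\|-L_{2N}|>\kappa\}
\]
have measure $<e^{-cq^\gamma}$. Remove their translates by $jN\alpha$, $j<m$. The union $\mathcal{B}$ of the removed sets has measure $\le 2me^{-cq^\gamma}\le 2e^{-\frac{9c}{10}q^\gamma}$. For $\theta\notin\mathcal{B}$ we get $\ln\|B_j\|\ge N(L_N-\kappa)=\ln\mu$ and
\[
\ln\|B_{j+1}\|+\ln\|B_j\|-\ln\|B_{j+1}B_j\|\le 2N(L_N+\kappa)-2N(L_{2N}-\kappa)\le 2N(L_N-L_{2N})+4N\kappa.
\]
Using $L_{2N}>\frac9{10}L_N$, this is $\le \frac15NL_N+4N\kappa$. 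With $L_N>90\kappa$ this is $<\frac12 N(L_N-\kappa)$: indeed $\frac15L_N+4\kappa<\frac12L_N-\frac12\kappa$ iff $\frac3{10}L_N>\frac92\kappa$, i.e.\ $L_N>15\kappa$. Hence the AP hypotheses hold off $\mathcal{B}$.

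\textbf{Step 2 (integrate).} The AP gives, for $\theta\notin\mathcal{B}$,
\[
\Bigl|\ln\|A_{N'}(\theta)\|+\sum_{j=2}^{m-1}\ln\|A_N(\theta+(j-1)N\alpha)\|-\sum_{j=1}^{m-1}\ln\|A_{2N}(\theta+(j-1)N\alpha)\|\Bigr|\le Cm e^{-N(L_N-\kappa)}.
\]
Integrate over $\mathbb{T}\setminus\mathcal{B}$. On $\mathcal{B}$, bound each term trivially using $\ln\|A_K(\theta)\|\le K\ln\sup\|A\|$; together with the measure bound this contributes at most $C N' \,\mathrm{meas}(\mathcal{B})\lesssim mN e^{-\frac{9c}{10}q^\gamma}$. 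By translation invariance of $d\theta$, the integrated identity reads
\[
|N'L_{N'}+(m-2)NL_N-2(m-1)NL_{2N}|\le Cme^{-80\kappa N}+CmNe^{-\frac{9c}{10}q^\gamma}.
\]
Dividing by $N'=mN$ gives
\[
|L_{N'}+L_N-2L_{2N}|\le \frac{2L_N-2L_{2N}}{m}+\text{(errors)}\le\frac{2L_N}{m}+\text{(errors)},
\]
where we used $L_{2N}\ge0$. For the errors: $Ce^{-80\kappa N}/N\le e^{-\frac c4 q^\gamma}$ because $N>C_1q^\sigma\gg q^\gamma$, and $Ce^{-\frac{9c}{10}q^\gamma}<\frac12e^{-\frac c2q^\gamma}$ for $q\ge q_0$ large. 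This gives the claimed bound.

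\textbf{Main obstacle.} The delicate step is Step 1: checking that the AP's closeness condition follows from $L_{2N}>\frac9{10}L_N$ and $L_N>90\kappa$ with the correct constants. It also requires that the trivial bound on $\mathcal{B}$ be dominated by the measure bound, which needs $m$ to be at most $e^{\frac c{10}q^\gamma}$. Everything else is bookkeeping.
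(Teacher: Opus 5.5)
Your proof is correct and follows the paper's argument essentially step for step. It uses the same blocks $A_N(\theta+jN\alpha)$, the large deviation theorem at scales $N$ and $2N$ to get a good set with measure deficit $\le 2me^{-cq^{\gamma}}$, and the avalanche principle with $\mu=e^{N(L_N-\kappa)}$ (the paper takes the smaller $e^{\frac{89}{90}NL_N}$). It then integrates and uses subadditivity $0\le L_{2N}\le L_N$; this subadditivity, not just $L_{2N}\ge 0$, is what justifies $\frac{2}{m}|L_N-L_{2N}|\le\frac{2L_N}{m}$.

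There is one small bookkeeping slip. Bounding $Ce^{-80\kappa N}/N$ by $e^{-\frac{c}{4}q^{\gamma}}$ is too weak to sum to less than $e^{-\frac{c}{2}q^{\gamma}}$. However, since $N>C_1q^{\sigma}$ with $\sigma>\gamma$, the same reasoning gives $Ce^{-80\kappa N}/N\le\frac12 e^{-\frac{c}{2}q^{\gamma}}$, which closes the estimate.
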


The proof is based on the following  avalanche principle.
\begin{proposition}[\cite{MR1847592}]\label{AP}
    Let $M_{1},\cdots, M_{n}$ be a sequence in $\mathrm{SL}(2,\mathbb{R})$ satisfying
    \begin{equation*}
        \|M_{j}\|\geqslant \mu>n, \quad j=1\cdots,n,
    \end{equation*}
    and for $j=1,\cdots, n-1$,
    \begin{equation*}
        |\ln \|M_{j}\|+\ln \|M_{j+1}\| -\ln  \|M_{j+1}M_{j}\||< \frac{1}{2}\ln \mu.
    \end{equation*}
    Then
    \begin{equation*}
        \bigg|\ln \|\prod_{j=1}^{n} M_{j}\|+ \sum_{j=2}^{n-1} \ln \|M_{j}\|-\sum_{j=1}^{n-1} \ln \|M_{j+1}M_{j}\|\bigg|<C\frac{n}{\mu}.
    \end{equation*}
\end{proposition}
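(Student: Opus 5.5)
The statement immediately above is Proposition \ref{AP}, the avalanche principle of Goldstein--Schlag, which the paper cites and which we would prove by the standard geometric argument for products of hyperbolic $\mathrm{SL}(2,\mathbb{R})$ matrices. For each $M_j$ take the singular value decomposition: $M_j$ maps a most-expanded unit vector $u_j$ to $\|M_j\|v_j$, and the unit vector $u_j^\perp$ to $\|M_j\|^{-1}v_j^\perp$. The key quantity is the angle between the outgoing direction $v_j$ of $M_j$ and the incoming contracted direction $u_{j+1}^\perp$ of $M_{j+1}$. One checks that
\[
\|M_{j+1}M_j\| \asymp \|M_{j+1}\|\,\|M_j\|\,\bigl(|\langle v_j,u_{j+1}\rangle| + O(\mu^{-2})\bigr).
\]
So the hypothesis $|\ln\|M_j\|+\ln\|M_{j+1}\|-\ln\|M_{j+1}M_j\||<\tfrac12\ln\mu$ says that $v_j$ is not too close to $u_{j+1}^\perp$, namely $|\langle v_j,u_{j+1}\rangle|\gtrsim \mu^{-1/2}$. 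Since $\mu^{-1/2}\gg\mu^{-1}$, the error terms of size $\mu^{-2}$ are negligible.

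The second step is an induction over partial products $P_k=M_k\cdots M_1$, establishing three properties.
\begin{itemize}
\item[(i)] The most-expanded output direction of $P_k$ lies within $O(\mu^{-2}\cdot\mu^{1/2})$ of $v_k$.
\item[(ii)] $\ln\|P_k\|$ obeys the additive relation
\[
\ln\|P_{k+1}\| = \ln\|P_k\| + \ln\|M_{k+1}M_k\| - \ln\|M_k\| + O(\mu^{-1}).
\]
\item[(iii)] $\|P_k\|$ remains large.
\end{itemize}
The mechanism is that a vector aligned with $v_k$ up to a small angle is sent by $M_{k+1}$ to something aligned with $v_{k+1}$ up to angle $O(\mu^{-2}/|\langle v_k,u_{k+1}\rangle|)$. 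Its norm equals $\|M_{k+1}\|\,|\langle v_k,u_{k+1}\rangle|$ times $(1+O(\mu^{-1}))$. The same factor $\|M_{k+1}\|\,|\langle v_k,u_{k+1}\rangle|$ appears in the two-step quantity $\|M_{k+1}M_k\|/\|M_k\|$. This is what yields the telescoping identity with a per-step error of $O(1/\mu)$.

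Summing the relation in (ii) over $k=1,\dots,n-1$ gives
\[
\ln\Bigl\|\prod_{j=1}^{n} M_j\Bigr\|=\sum_{j=1}^{n-1}\ln\|M_{j+1}M_j\|-\sum_{j=2}^{n-1}\ln\|M_j\| + O(n/\mu),
\]
which is the claim. The hypothesis $\mu>n$ keeps the accumulated error, and the accumulated multiplicative distortions $(1+O(\mu^{-1}))^n$, bounded.

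The main obstacle is ensuring that the angle errors do not compound across steps. This requires showing that the misalignment at step $k+1$ is controlled by $\mu^{-2}$ divided by the gap $\mu^{-1/2}$, independently of the previous misalignment. That independence holds because $M_{k+1}$ contracts angles near $u_{k+1}$ by a factor of order $\|M_{k+1}\|^{-2}|\langle v_k,u_{k+1}\rangle|^{-2}\le\mu^{-1}$. I would verify this contraction bound carefully by computing explicitly in the singular bases of $M_{k+1}$.
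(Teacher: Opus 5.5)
The paper gives no proof of Proposition~\ref{AP}; it is quoted from \cite{MR1847592}. Your sketch is a correct outline of the standard polar-decomposition argument behind it: the two-matrix estimate $\|BA\|=\|A\|\,\|B\|\,(|\langle v_A,u_B\rangle|+O(\mu^{-2}))$, the resulting bound $|\langle v_j,u_{j+1}\rangle|\gtrsim\mu^{-1/2}$, and the non-compounding $O(\mu^{-3/2})$ alignment of the top direction of $P_k$ with $v_k$, which gives an $O(\mu^{-1})$ error per step. One correction to your commentary: your misalignment bound is uniform in $k$ and the per-step log errors are simply summed, so your argument never uses $\mu>n$ and the remark about controlling $(1+O(\mu^{-1}))^n$ is unnecessary. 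It only needs $\mu$ to exceed an absolute constant, which is harmless in the paper's application, where $\mu=e^{\frac{89}{90}NL_N}$.
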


\begin{proof}[Proof of Theorem \ref{map}]
    For brevity, we write $L_{N} \coloneq L_{N}(A)$. We apply Proposition \ref{AP} to the matrices $M_{j}(\theta) \coloneq A_{N}(\theta + jN\alpha)$, $j=0,\cdots, m-1$. By Theorem \ref{ldt}, there exists a subset $\Omega_{N} \subseteq \mathbb{T}$ satisfying
    \begin{equation*}
        \operatorname{meas}(\mathbb{T} \setminus \Omega_{N}) < 2m e^{-cq^{\gamma}},
    \end{equation*}
    such that for all $\theta \in \Omega_{N}$,
    \begin{equation*}
        \begin{split}
            |\ln \|M_{j}\| - N L_{N}| &< N \kappa, \\
            |\ln \|M_{j+1}M_{j}\| - 2N L_{2N}| &< 2N\kappa.
        \end{split}
    \end{equation*}
    Observe that for $\theta \in \Omega_{N}$, we have
    \begin{equation*}
        \|M_{j}\| > e^{N(L_{N} - \kappa)} > e^{\frac{89}{90} N L_{N}}
    \end{equation*}
    and
    \begin{equation*}
        \begin{split}
            \big| \ln \|M_{j}\| + \ln \|M_{j+1}\| &- \ln \|M_{j+1}M_{j}\| \big| \\
            &< 4N\kappa + 2N |L_{N} - L_{2N}| < \frac{1}{4} N L_{N}.
        \end{split}
    \end{equation*}
    Apply Proposition \ref{AP} with $\mu=e^{\frac{89}{90} N L_{N}}$, one can see that $\mu>e^{\frac{c}{10}q^{\gamma}}>m$ and $\frac{1}{4}NL_{N}<\frac{1}{2}\ln \mu$ due to the largeness of $q_{0}$, and thus for $\theta \in \Omega_{N}$,
    \begin{equation*}
        \bigg| \ln \bigg\| \prod_{j=0}^{m-1} M_{j} \bigg\| + \sum_{j=1}^{m-2} \ln \|M_{j}\| - \sum_{j=0}^{m-2} \ln \|M_{j+1}M_{j}\| \bigg| < C m e^{-\frac{89}{90}NL_{N}}.
    \end{equation*}
    Integrating over $\Omega_{N}$, we obtain
    \begin{equation*}
        \begin{split}
            \bigg| \int_{\Omega_{N}} \ln \|A_{N'}(\theta)\| \,\mathrm{d}\theta &+ \sum_{j=1}^{m-2} \int_{\Omega_{N}} \ln \|A_{N}(\theta + jN\alpha)\| \,\mathrm{d}\theta \\ 
            &- \sum_{j=0}^{m-2} \int_{\Omega_{N}} \ln \|A_{2N}(\theta + jN\alpha)\| \,\mathrm{d}\theta \bigg| < C m e^{-\frac{89}{90}NL_{N}}.
        \end{split}
    \end{equation*}
    Combining this estimate with the integral over $\mathbb{T} \setminus \Omega_{N}$, we find
    \begin{equation*}
        \bigg| L_{N'} + \frac{m-2}{m} L_{N} - 2\frac{m-1}{m} L_{2N} \bigg| < C \frac{m}{N'} e^{-\frac{89}{90}NL_{N}} +  2\|A\|_{C^{0}}m e^{-cq^{\gamma}} < e^{-\frac{c}{2}q^{\gamma}}.
    \end{equation*}
    It follows from the subadditivity of $L_{N}$ that
    \begin{equation*}
        |L_{N'} + L_{N} - 2L_{2N}| < e^{-\frac{c}{2}q^{\gamma}} + \frac{2L_{N}}{m}.
    \end{equation*}
\end{proof}

\section{Multi-scale induction}
We establish the local uniform error estimate between Lyapunov exponent and its finite-scale approximation.
\begin{theorem}\label{approx}
    Assume that $\alpha\in \Omega(\eta)$ and $L(A)>100\kappa>0$. There exist $0<\varepsilon<1$, $c'>0$, and $C_{1}\tilde{q}_{0}^{\sigma}<N_{0}<C_{2}\tilde{q}_{0}^{\sigma_{1}}$ such that for sufficiently large $\tilde{q}_{0}$ and any $B$ satisfying $\|A-B\|_{s,\rho}<\varepsilon$, we have
    \begin{equation*}
        |L(B)+L_{N_{0}}(B)-2L_{2N_{0}}(B)|< \tilde{q}_{0}^{-c'}.
    \end{equation*}
\end{theorem}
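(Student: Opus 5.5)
We will prove Theorem \ref{approx} by a multi-scale induction built on Theorem \ref{map}. We choose scales $N_0<N_1<N_2<\cdots$, each adapted to a denominator $\tilde q_k$, and show that the differences $L_{N_{k+1}}-L_{N_k}$ form a summable sequence. The key arithmetic input is Lemma \ref{paranew}. It says $\eta\sigma_1<\gamma\sigma$, which will let the Brjuno condition \eqref{brjuno} bridge consecutive windows.

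\emph{Setup.} Since $L(A)>100\kappa$ and $L$ is upper semicontinuous (an infimum of continuous functions $L_N$), we fix $N$ large so that $L_{N}(A)>100\kappa$. We then take $\varepsilon$ small so that $L_N(B)>99\kappa$ for $\|A-B\|_{s,\rho}<\varepsilon$; the constants of Theorem \ref{ldt} are uniform there. Let $\tilde q_0$ be a large continued fraction denominator of $\alpha$. Since $L_M(B)\geqslant L(B)$, we have $L_M(B)>90\kappa$ only for $M$ along subsequences. The condition $L_{2N_0}>\frac{9}{10}L_{N_0}$ must be arranged by choosing $N_0$ in the window $(C_1\tilde q_0^{\sigma},C_2\tilde q_0^{\sigma_1})$ suitably. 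Following the standard argument, we pick $N_0$ and then note that Theorem \ref{map} gives $L_{2N_0}\approx L_{N_0}-$ small. To get started we use that $L_{N}$ decreases to $L\geqslant 100\kappa$ along doubling, so $L_{2N_0}\geqslant L(B)$. Uniformity in $B$ comes from $L(B)$ staying near $\geqslant 99\kappa$, which is itself part of the inductive conclusion; we run the induction with the lower bound as hypothesis.

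\emph{Inductive step.} Given $N_k$ adapted to a denominator $\tilde q_k$, we choose the next denominator $\tilde q_{k+1}$ and $N_{k+1}$ in its window with $N_k\mid N_{k+1}$ (we can round within a window, since the windows are wide). We need:
\begin{itemize}
\item[(i)] the ratio $m_k=N_{k+1}/N_k<e^{\frac c{10}\tilde q_k^{\gamma}}$, so that Theorem \ref{map} applies at scale $N_k$;
\item[(ii)] $m_k\geqslant \tilde q_k^{c}$, which gives polynomial gain.
\end{itemize}
Take $\tilde q_{k+1}=q_{n+1}$ where $\tilde q_k=q_n$ (or an intermediate power when the gap is small). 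For (i) we need $\tilde q_{k+1}^{\sigma_1}\lesssim e^{c\tilde q_k^\gamma}$, and \eqref{brjuno} gives $\ln q_{n+1}\le C_\alpha q_n^\eta$. This is where we must also handle the smallest allowed $N$, so we instead work with $N_k\sim \tilde q_k^{\sigma}$ and windows $\tilde q^{\sigma}..\tilde q^{\sigma_1}$. The requirement becomes $\sigma_1 C_\alpha q_n^{\eta}<c\, q_n^{\gamma\sigma}$-type comparisons after reindexing $N_k\approx q_n^{\sigma}$. We make this precise by comparing exponents in terms of $N_k$: $\ln N_{k+1}\lesssim \sigma_1 \ln q_{n+1}\lesssim N_k^{\eta/\sigma}$, while $e^{c q_n^\gamma}\approx e^{cN_k^{\gamma/\sigma_1}}$ in the worst case. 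Here \eqref{gap} is exactly what is needed, possibly after tightening to $\eta\sigma_1<\gamma\sigma$ in the right normalization. When $q_{n+1}$ is too close to $q_n$ for (ii), we skip denominators (use $q_{n+j}$) until the ratio exceeds $q_n^c$. This is still admissible since the windows overlap whenever $\sigma_1>\sigma$.

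\emph{Telescoping.} Applying Theorem \ref{map} at $N_k$ and at $2N_k\mid 2N_{k+1}$ yields
\[|L_{N_{k+1}}+L_{N_k}-2L_{2N_k}|<e^{-\frac c2\tilde q_k^\gamma}+2L_{N_k}/m_k,\]
and similarly for the doubled scales. Subtracting, we get $|L_{2N_{k+1}}-L_{N_{k+1}}-(L_{2N_k}-L_{N_k})|\lesssim \tilde q_k^{-c}$, which also propagates the hypotheses $L_{N_{k+1}}>90\kappa$ and $L_{2N_{k+1}}>\frac9{10}L_{N_{k+1}}$. Hence
\[|L_{N_{k+1}}-L_{N_k}|\lesssim |L_{2N_k}-L_{N_k}|\cdot(\ldots)+\tilde q_k^{-c}.\]
Summing the geometric-polynomial tail $\sum_k\tilde q_k^{-c}\lesssim \tilde q_0^{-c'}$, which holds since $\tilde q_k$ grows at least geometrically, and letting $k\to\infty$ gives $|L(B)+L_{N_0}-2L_{2N_0}|<\tilde q_0^{-c'}$. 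The main obstacle is step (i)/(ii): verifying with \eqref{brjuno} and \eqref{gap} that consecutive windows can always be chained with ratio between $\tilde q_k^c$ and $e^{c\tilde q_k^\gamma}$.
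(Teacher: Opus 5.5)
The overall architecture is right: Theorem \ref{map} along scales $N_k\mid N_{k+1}$ with polynomial gain, \eqref{brjuno} and \eqref{gap} to cap the ratios, then telescoping. But the two steps that make it run are missing. First, the start of your induction is circular. Theorem \ref{map} \emph{assumes} $L_{2N_0}>\frac{9}{10}L_{N_0}$, so it cannot be used to produce it. The bound $L_{2N_0}(B)\geqslant L(B)$ says nothing about the ratio $L_{2N_0}/L_{N_0}$. And fixing $\varepsilon$ from an unrelated $N$ before $\tilde q_0$ would require a lower bound on $L(B)$ for all $B$ near $A$, which is essentially the conclusion. The paper works with $A$ alone first. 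If $L_{2N}(A)\leqslant\frac{99}{100}L_N(A)$ held for every dyadic $N$ in $(C_1\tilde q_0^{\sigma},C_2\tilde q_0^{\sigma_1})$, the $\asymp(\sigma_1-\sigma)\log\tilde q_0$ available doublings would push $L_N(A)$ below $L(A)$, a contradiction. This is Lemma \ref{first}, and it yields $N_0$. Only then is $\varepsilon=\varepsilon(N_0)$ chosen, from continuity of the finite-scale maps $B\mapsto L_{N_0}(B)$ and $B\mapsto L_{2N_0}(B)$, giving \eqref{aB}. The bounds $L_{N_k}(B)>90\kappa$ for $k\geqslant1$ are then outputs of the telescoping, not inputs.

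Second, your propagation step is incorrect and would not close. Applying Theorem \ref{map} at base $2N_k$, as you propose, brings in $L_{4N_k}$ and extra hypotheses; it does not give $|L_{2N_{k+1}}-L_{N_{k+1}}-(L_{2N_k}-L_{N_k})|\lesssim\tilde q_k^{-c}$. Even that bound would only keep $L_{2N_k}-L_{N_k}$ near $L_{2N_0}-L_{N_0}$, which is controlled only by a fixed fraction of $L_{N_0}$, so $\sum_k|L_{N_{k+1}}-L_{N_k}|$ would not converge. The missing idea is to use the \emph{same} base $N=N_k$ for both $N'=N_{k+1}$ and $N'=2N_{k+1}$; this is why one needs $2N_{k+1}/N_k<e^{\frac{c}{10}\tilde q_k^{\gamma}}$. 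Both $L_{N_{k+1}}$ and $L_{2N_{k+1}}$ are then within $O(\tilde q_k^{-c})$ of the same quantity $2L_{2N_k}-L_{N_k}$. So $|L_{2N_{k+1}}-L_{N_{k+1}}|\lesssim\tilde q_k^{-c}$ in absolute terms, and $|L_{N_{k+1}}-L_{N_k}|\leqslant|L_{N_{k+1}}+L_{N_k}-2L_{2N_k}|+2|L_{2N_k}-L_{N_k}|\lesssim\tilde q_{k-1}^{-c}$ for $k\geqslant1$. Since the $k=0$ step is not small, the telescoping must start at $N_1$, combined with the direct bound on $|L_{N_1}+L_{N_0}-2L_{2N_0}|$.

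Finally, the chaining (i)/(ii) that you leave as the main obstacle is settled as follows. Fix $\zeta\in(\sigma_1/\sigma,\gamma/\eta)$, which is nonempty by \eqref{gap}. Let $\tilde q_{k+1}$ be the first denominator exceeding $\tilde q_k^{\zeta}$, and let $N_{k+1}$ be the first multiple of $N_k$ above $(C_1+C_2)\tilde q_{k+1}^{\sigma}$. Then $N_{k+1}/N_k\gtrsim\tilde q_k^{\zeta\sigma-\sigma_1}$. Because the preceding denominator is at most $\tilde q_k^{\zeta}$, \eqref{brjuno} gives $\ln\tilde q_{k+1}\leqslant C_\alpha\tilde q_k^{\zeta\eta}\ll\tilde q_k^{\gamma}$. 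This skipping is where \eqref{gap} genuinely enters, not your comparison through $N_k$; without skipping, $\eta<\gamma$ would already suffice. Also, consecutive windows need not overlap, and here they do not, since Theorem \ref{map} only requires $N_k$ to lie in the $\tilde q_k$-window.
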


The proof of Theorem \ref{approx} utilizes multi-scale induction. We fix any $\alpha\in \Omega(\eta)$. 
Let $\{\tilde{a}_{i}/\tilde{q}_{i}\}$ be a subsequence of the continued fraction expansion of $\alpha$. We inductively choose the sequences $\{\tilde{q}_{s}\}_{s\geqslant 0}$ and $\{N_{s}\}_{s\geqslant
 0}$. Let's start from $s=0$. 

\begin{lemma}[\cite{MR4477429}]\label{first}
    Assume that $L(A)>100\kappa>0$. There exists $N_{0}$ with $C_{1}(\kappa)q_{0}^{\sigma}<N_{0}<2N_{0}<C_{2}(\kappa)q_{0}^{\sigma_{1}}$ such that
    \begin{equation*}
        L_{2N_{0}}(A)>\frac{99}{100}L_{N_{0}}(A).
    \end{equation*}
\end{lemma}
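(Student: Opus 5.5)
We argue by contradiction, using only subadditivity and the fact that $L(A)=\inf_N L_N(A)>100\kappa$. First note that for every $N$ we have $L_N(A)\geqslant L(A)>100\kappa>0$. Also, since $L_N\to L$ and $L_{2N}\leqslant L_N$, the sequence $L_N$ cannot keep dropping geometrically along a doubling chain.

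Fix $q_0$ large and set $M=\lceil C_1(\kappa)q_0^{\sigma}\rceil+1$. Consider the doubling chain $N=M,2M,4M,\dots,2^kM$, where $k$ is the largest integer with $2^{k+1}M<C_2(\kappa)q_0^{\sigma_1}$. Since $\sigma_1>\sigma$, the number of steps satisfies $k\gtrsim(\sigma_1-\sigma)\log_2 q_0\to\infty$ as $q_0\to\infty$; this uses $C_1,C_2$ depending only on $\kappa$.

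Suppose the conclusion fails for every $N_0=2^jM$, $0\leqslant j\leqslant k$. Then
\begin{equation*}
L_{2^{j+1}M}(A)\leqslant \tfrac{99}{100}L_{2^jM}(A)\quad\text{for all } j\leqslant k.
\end{equation*}
Iterating gives
\begin{equation*}
L_{2^{k+1}M}(A)\leqslant (\tfrac{99}{100})^{k+1}L_M(A)\leqslant (\tfrac{99}{100})^{k+1}\,\ln\sup_{\theta}\|A(\theta)\|.
\end{equation*}
The last inequality uses $L_M\leqslant \ln\|A\|_{C^0}$, which holds by submultiplicativity.

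The right-hand side tends to $0$ as $k\to\infty$, since $\|A\|_{C^0}\leqslant\|A\|_{s,\rho}$ is fixed. On the other hand, the left-hand side is at least $L(A)>100\kappa$. So for $q_0$ large enough that $(99/100)^{k+1}\ln\|A\|_{C^0}<100\kappa$, we reach a contradiction. Hence some $N_0=2^jM$ in the window satisfies $L_{2N_0}(A)>\frac{99}{100}L_{N_0}(A)$. It also satisfies $C_1q_0^{\sigma}<N_0<2N_0<C_2q_0^{\sigma_1}$ by construction.

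The only delicate point is ensuring the window $(C_1q_0^\sigma,C_2q_0^{\sigma_1})$ contains a doubling chain whose length grows with $q_0$. This follows from $\sigma_1>\sigma$ together with the requirement that $q_0$ be chosen sufficiently large, depending on $\kappa$ and $\|A\|_{s,\rho}$. No large deviation input is needed here.
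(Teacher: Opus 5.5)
Your proof is correct. The paper does not prove this lemma; it imports it from \cite{MR4477429}. Your doubling-chain argument is the standard one for statements of this type. If $L_{2N}\leqslant\frac{99}{100}L_{N}$ held at every step of the chain $M,2M,\dots,2^{k}M$ inside the window, you would get
\[
100\kappa<L(A)\leqslant L_{2^{k+1}M}(A)\leqslant\left(\tfrac{99}{100}\right)^{k+1}\ln\sup_{\theta}\|A(\theta)\|.
\]
This is impossible once $k$, which grows like $(\sigma_{1}-\sigma)\log_{2}q_{0}$, is large. All the individual steps check out: $L=\inf_{N}L_{N}$, $L_{M}\leqslant\ln\sup_{\theta}\|A(\theta)\|$, and $k\to\infty$ because $\sigma_{1}>\sigma$.

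A simpler route uses only $L_{N}(A)\to L(A)$. There is $N_{*}(A)$ such that $L_{N}(A)<\frac{100}{99}L(A)\leqslant\frac{100}{99}L_{2N}(A)$ for all $N\geqslant N_{*}(A)$. So every $N$ in the window works once $C_{1}q_{0}^{\sigma}\geqslant N_{*}(A)$. The drawback is that $N_{*}(A)$ depends on the non-quantitative rate of convergence of $L_{N}(A)$.

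Your version is more quantitative. The threshold on $q_{0}$ depends on $A$ only through $\kappa$ and $\sup_{\theta}\|A(\theta)\|$, together with the fixed constants $C_{1},C_{2},\sigma,\sigma_{1}$. The paper does not need this extra uniformity, since it passes from $A$ to nearby $B$ through the continuity of $L_{N_{0}}(\cdot)$ and $L_{2N_{0}}(\cdot)$.

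Either route gives the lemma for every sufficiently large $q_{0}$. That is exactly what is required, because the proof of Theorem \ref{mainthm} lets $\tilde{q}_{0}\to\infty$.
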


Now we construct the sequences for $s\geqslant 1$. Recall that
$\frac{\sigma_{1}}{\sigma}< \frac{\gamma}{\eta}$ by Lemma \ref{paranew}. Fix any number $\zeta$ satisfying
\begin{equation}\label{zeta}
    \frac{\sigma_{1}}{\sigma}<\zeta< \frac{\gamma}{\eta}.
\end{equation}

\begin{lemma}\label{induction}
    There exist $\{\tilde{q}_{s}\}_{s\geqslant 0}$ and $\{N_{s}\}_{s\geqslant 0}$ such that for all $s\geqslant 0$:
\begin{equation}\label{qs}
    \tilde{q}_{s+1} \ \text{is the smallest}\ q_{j}\ \text{such that}\ \tilde{q}_{s+1}> \tilde{q}_{s}^{\zeta}, 
\end{equation}
\begin{equation}\label{Ns}
    C_{1}(\kappa)\tilde{q}_{s}^{\sigma}<N_{s}<2N_{s}<C_{2}(\kappa) \tilde{q}_{s}^{\sigma_{1}},
\end{equation}
\begin{equation}\label{ms}
    N_{s+1}=m_{s+1}N_{s},\ \tilde{q}_{s}^{\zeta \sigma-\sigma_{1}}< m_{s+1}<2m_{s+1}<e^{\frac{c}{10}\tilde{q}_{s}^{\gamma}}.
\end{equation}
\end{lemma}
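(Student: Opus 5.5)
The plan is an induction on $s$, where at each stage we choose $\tilde q_{s+1}$ first and then $N_{s+1}$ as a multiple of $N_s$. For $s=0$ we take $\tilde q_0$ to be a sufficiently large continued fraction denominator $q_{j}\geqslant q_0$ and $N_0$ as in Lemma \ref{first}; this gives \eqref{Ns} for $s=0$. Now suppose $\tilde q_s$ and $N_s$ satisfying \eqref{Ns} have been chosen. Let $\tilde q_{s+1}$ be the smallest $q_j$ with $q_j>\tilde q_s^{\zeta}$. This is \eqref{qs}, and such a $q_j$ exists because $q_j\to\infty$.

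The key arithmetic step is an upper bound on $\tilde q_{s+1}$. Let $q_{j-1}$ be the preceding denominator. By minimality, $q_{j-1}\leqslant \tilde q_s^{\zeta}$; this also holds when $q_{j-1}=\tilde q_s$ or is smaller, since $\zeta>1$. The Brjuno condition \eqref{brjuno} then gives
\begin{equation*}
\ln \tilde q_{s+1}\leqslant C_\alpha q_{j-1}^{\eta}\leqslant C_\alpha \tilde q_s^{\zeta\eta}.
\end{equation*}
Since $\zeta\eta<\gamma$ by \eqref{zeta}, we obtain, for $\tilde q_0$ large,
\begin{equation*}
\tilde q_{s+1}^{\sigma_1}\leqslant \exp\big(\sigma_1 C_\alpha \tilde q_s^{\zeta\eta}\big)\ll e^{\frac{c}{10}\tilde q_s^{\gamma}}.
\end{equation*}
This is where the condition $\eta\sigma_1<\gamma\sigma$ from Lemma \ref{paranew} is used. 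We also have the lower bound $\tilde q_{s+1}>\tilde q_s^{\zeta}$.

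Next we choose $m_{s+1}$. We need $N_{s+1}=m_{s+1}N_s$ to lie in the window $\big(C_1\tilde q_{s+1}^{\sigma}, \tfrac12 C_2\tilde q_{s+1}^{\sigma_1}\big)$. This window has ratio of endpoints of order $\tilde q_{s+1}^{\sigma_1-\sigma}$, which tends to infinity. Since $N_s<\tfrac12 C_2\tilde q_s^{\sigma_1}$ is much smaller than the window length, some integer multiple of $N_s$ falls in the window. To secure the lower bound in \eqref{ms}, take $m_{s+1}$ to be the smallest integer with $m_{s+1}N_s>2C_1\tilde q_{s+1}^{\sigma}$. Then
\begin{equation*}
m_{s+1}\geqslant \frac{2C_1\tilde q_{s+1}^{\sigma}}{N_s}> \frac{4C_1}{C_2}\,\tilde q_s^{\zeta\sigma-\sigma_1}.
\end{equation*}
Here $\zeta\sigma-\sigma_1>0$, and the constant factor can be absorbed, for $\tilde q_0$ large, by shrinking the exponent slightly or by adjusting the choice. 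The number $N_{s+1}\leqslant 2C_1\tilde q_{s+1}^{\sigma}+N_s$ stays below $\tfrac12C_2\tilde q_{s+1}^{\sigma_1}$ once $\tilde q_{s+1}$ is large. Finally, $m_{s+1}\leqslant N_{s+1}/N_s\lesssim \tilde q_{s+1}^{\sigma_1}/\tilde q_s^{\sigma}$, and by the arithmetic step $2m_{s+1}<e^{\frac c{10}\tilde q_s^{\gamma}}$.

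The main obstacle is the upper bound on $\tilde q_{s+1}$. One must check carefully that the predecessor $q_{j-1}$ is at most $\tilde q_s^{\zeta}$, including in the edge case $j-1$ with $q_{j-1}<\tilde q_s$, and that the resulting subexponential bound $\exp(C\tilde q_s^{\zeta\eta})$ is beaten by $e^{\frac c{10}\tilde q_s^{\gamma}}$ uniformly in $s$. The strict inequality $\zeta\eta<\gamma$ and the largeness of $\tilde q_0$ give this. The constants $C_1,C_2$ in the lower bound of \eqref{ms} are handled by the same largeness.
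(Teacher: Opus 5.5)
Your proposal is correct and follows essentially the paper's route. It uses the same Brjuno bound $\ln\tilde q_{s+1}\leqslant C_\alpha\tilde q_s^{\zeta\eta}$ via the predecessor $q_{j-1}\leqslant\tilde q_s^{\zeta}$, and the same choice of $N_{s+1}$ as the first multiple of $N_s$ above a threshold $K\tilde q_{s+1}^{\sigma}$. The one loose end is your factor $4C_1/C_2$, which may be less than $1$: shrinking the exponent is not allowed because \eqref{ms} fixes it as $\zeta\sigma-\sigma_1$, but enlarging the threshold constant works. The paper takes $m_{s+1}=\lfloor (C_1+C_2)\tilde q_{s+1}^{\sigma}/N_s\rfloor+1$, which makes the factor exceed $1$, while $\sigma_1>\sigma$ still gives $2N_{s+1}<C_2\tilde q_{s+1}^{\sigma_1}$.
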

\begin{proof}
We choose $\tilde{q}_{0}=q_{0}$ sufficiently large and define $\tilde{q}_{s}$ as in \eqref{qs}. We use induction to prove the existence of $N_{s}$ satisfying \eqref{Ns} and \eqref{ms} for any $s\geqslant 0$.

For $s=0$, the existence of $N_{0}$ is ensured by Lemma \ref{first}.

For $s\geqslant 0$, we assume that $N_{s}$ has been constructed so that \eqref{Ns} holds. We define $m_{s+1}$ and $N_{s+1}$ as follows.
Let $q_{j-1}$ be the denominator before $q_{j}=\tilde{q}_{s+1}$. Then we have $q_{j-1}\leqslant \tilde{q}_{s}^{\zeta}$. By \eqref{brjuno}, we have
\begin{equation*}
    \ln \tilde{q}_{s+1} =\ln q_{j}\leqslant C_{\alpha} q_{j-1}^{\eta} \leqslant C_{\alpha}\tilde{q}_{s}^{\eta\zeta}.
\end{equation*}
Thus, by the selection of $\tilde{q}_{s}$ and \eqref{zeta}, one has
\begin{equation}\label{qs+1}
    \tilde{q}_{s}^{\zeta}<\tilde{q}_{s+1} \leqslant  \exp(C_{\alpha}\tilde{q}_{s}^{\zeta \eta}) < e^{\frac{c}{10\sigma}\tilde{q}_{s}^{\gamma}},
\end{equation}
where $c$ is defined in Theorem \ref{ldt} and $\tilde{q}_{0}$ is sufficiently large.
Take $N_{s+1}=m_{s+1}N_{s}$ with \begin{equation*}
    m_{s+1}=\bigg\lfloor\frac{ (C_{1}+C_{2})\tilde{q}_{s+1}^{\sigma}}{N_{s}}\bigg\rfloor+1.
\end{equation*}
One can check that (by \eqref{zeta} and $\sigma_{1}>\sigma$), for $\tilde{q}_{0}$ sufficiently large,
\begin{equation*}
    \begin{split}
        &N_{s+1}\geqslant  (C_{1}+C_{2})\tilde{q}_{s+1}^{\sigma}\geqslant C_{1}\tilde{q}_{s+1}^{\sigma}, \\
        &2N_{s+1}\leqslant 4 (C_{1}+C_{2})\tilde{q}_{s+1}^{\sigma} \leqslant C_{2}\tilde{q}_{s+1}^{\sigma_{1}},
    \end{split}
\end{equation*}
which proves \eqref{Ns} for $N_{s+1}$. By \eqref{Ns} and \eqref{qs+1},
\begin{equation*}
    \begin{split}
        &m_{s+1}\geqslant  (C_{1}+C_{2})\tilde{q}_{s+1}^{\sigma} C_{2}^{-1}\tilde{q}_{s}^{-\sigma_{1}} >\tilde{q}_{s}^{\sigma \zeta-\sigma_{1}},\\
        &2m_{s+1}\leqslant 4C_{1}^{-1}\tilde{q}_{s}^{-\sigma}(C_{1}+C_{2})\tilde{q}_{s+1}^{\sigma} \leqslant \tilde{q}_{s+1}^{\sigma} \leqslant  e^{\frac{c}{10}\tilde{q}_{s}^{\gamma}}.
    \end{split}
\end{equation*}
This proves \eqref{ms} for $N_{s+1}$.
\end{proof}

Now we finish the proof of Theorem \ref{approx}.
\begin{proof}[Proof of Theorem \ref{approx}]
Let $C_{0}\coloneq 10(\|A\|_{C^{0}}+1)$. For brevity, we drop the dependency on $B$ and write $L_{N}\coloneq L_{N}(B)$ when there is no ambiguity.
We proceed by induction to show that the sequences $\{\tilde{q}_{s}\}$ and $\{N_{s}\}$ defined in Lemma \ref{induction} additionally satisfy the following bounds for any $s\geqslant 0$:
\begin{equation}\label{Q1}
    |L_{N_{s+1}}+L_{N_{s}}-2L_{2N_{s}}|<C_{0}\tilde{q}_{s}^{\sigma_{1}-\zeta\sigma},
\end{equation}
\begin{equation}\label{Q2}
    |L_{2N_{s+1}}-L_{N_{s+1}}|<2C_{0}\tilde{q}_{s}^{\sigma_{1}-\zeta\sigma},
\end{equation}
\begin{equation}\label{Q3}
    |L_{N_{s+1}}-L_{N_{s}}|< 10C_{0}\tilde{q}_{s-1}^{\sigma_{1}-\zeta\sigma},
\end{equation}
where we set $\tilde{q}_{-1}=1$ for simplicity.

{\bf Base case: }For the base case $s=0$, the assumption $L(A)>100\kappa>0$ and the subadditivity of the Lyapunov exponent imply $L_{N_{0}}(A)>100\kappa>0$. By Lemma \ref{first}, we have $L_{2N_{0}}(A)>\frac{99}{100}L_{N_{0}}(A)$. Since the finite-scale Lyapunov exponent $L_{N_{0}}(\cdot)$ is continuous, there exists $0<\varepsilon<1$ (depending on $A$, $N_{0}$, and $\kappa$) such that for any $\|A-B\|_{s,\rho}<\varepsilon$,
\begin{equation}\label{aB}
    L_{N_{0}}(B)> 99\kappa \quad \text{and} \quad L_{2N_{0}}(B)>\frac{98}{100}L_{N_{0}}(B).
\end{equation}

Applying Theorem \ref{map} with $N'=N_{1}$, $m=m_{1}$, and $N=N_{0}$, and invoking Lemma \ref{induction}, we obtain
\begin{equation}\label{N1}
    |L_{N_{1}}+L_{N_{0}}-2L_{2N_{0}}|<e^{-\frac{c}{2}\tilde{q}_{0}^{\gamma}} + \frac{2L_{N_{0}}}{m_{1}}< C_{0}\tilde{q}_{0}^{\sigma_{1}-\zeta\sigma}.
\end{equation}
Applying Theorem \ref{map} again with $N'=2N_{1}$, $m=2m_{1}$, and $N=N_{0}$ yields
\begin{equation}\label{2N1}
    |L_{2N_{1}}+L_{N_{0}}-2L_{2N_{0}}|<C_{0}\tilde{q}_{0}^{\sigma_{1}-\zeta\sigma}.
\end{equation}
Combining \eqref{N1} and \eqref{2N1}, we deduce
\begin{equation*}
    |L_{2N_{1}}-L_{N_{1}}|<2C_{0}\tilde{q}_{0}^{\sigma_{1}-\zeta\sigma}.
\end{equation*}
By \eqref{aB} and \eqref{N1},
\begin{equation}\label{N1N0}
    \begin{split}
        |L_{N_{1}}-L_{N_{0}}|&\leqslant |L_{N_{1}}+L_{N_{0}}-2L_{2N_{0}}|+ 2|L_{2N_{0}}-L_{N_{0}}|\\
        &<  C_{0}\tilde{q}_{0}^{\sigma_{1}-\zeta\sigma} +\frac{4}{100} L_{N_{0}}.
    \end{split}
\end{equation}
Since $\tilde{q}_{-1}=1$, we trivially have
\begin{equation*}
    |L_{N_{1}}-L_{N_{0}}|<10 C_{0}\tilde{q}_{-1}^{\sigma_{1}-\zeta\sigma}.
\end{equation*}
Hence, \eqref{Q1}, \eqref{Q2}, and \eqref{Q3} hold for $s=0$.

{\bf Inductive case: } Now, assume that \eqref{Q1}, \eqref{Q2}, and \eqref{Q3} hold for all $s\leqslant j-1$.
Note that by taking $\tilde{q}_{0}$ sufficiently large, \eqref{N1N0} implies
\begin{equation}\label{N1lower}
    L_{N_{1}}>\frac{48}{50} L_{N_{0}}-C_{0}\tilde{q}_{0}^{\sigma_{1}-\zeta\sigma}>95\kappa.
\end{equation}
Combining \eqref{N1lower} with the inductive hypothesis \eqref{Q3}, for sufficiently large $\tilde{q}_0$, we have
\begin{equation*}
    \begin{split}
        L_{N_{j}}&\geqslant L_{N_{1}}-\sum_{s=1}^{j-1}|L_{N_{s+1}}-L_{N_{s}}|\\
        &\geqslant 95\kappa- \sum_{s=1}^{j-1} 10 C_{0}\tilde{q}_{s-1}^{\sigma_{1}-\zeta\sigma} > 90\kappa.
    \end{split}
\end{equation*}
By the inductive hypothesis \eqref{Q2}, we have 
\begin{equation*}
    |L_{2N_{j}}-L_{N_{j}}|< 2 C_{0}\tilde{q}_{j-1}^{\sigma_{1}-\zeta\sigma}.
\end{equation*}
For sufficiently large $\tilde{q}_0$, this implies 
\begin{equation*}
    L_{2N_{j}}> L_{N_{j}} -2 C_{0}\tilde{q}_{j-1}^{\sigma_{1}-\zeta\sigma}>L_{N_{j}} - 9\kappa> \frac{9}{10}L_{N_{j}}.
\end{equation*}

Applying Theorem \ref{map} with $N'=N_{j+1}$, $m=m_{j+1}$, $N=N_{j}$, and $q=\tilde{q}_{j}$, we get
\begin{equation}\label{Nj}
    |L_{N_{j+1}}+L_{N_{j}}-2L_{2N_{j}}|<e^{-\frac{c}{2}\tilde{q}_{j}^{\gamma}} + \frac{2L_{N_{j}}}{m_{j+1}}<C_{0}\tilde{q}_{j}^{\sigma_{1}-\zeta\sigma}.
\end{equation}
Applying Theorem \ref{map} once more with $N'=2N_{j+1}$, $m=2m_{j+1}$, $N=N_{j}$, and $q=\tilde{q}_{j}$, we find
\begin{equation}\label{2Nj}
    |L_{2N_{j+1}}+L_{N_{j}}-2L_{2N_{j}}|<C_{0}\tilde{q}_{j}^{\sigma_{1}-\zeta\sigma}.
\end{equation}
Combining \eqref{Nj} and \eqref{2Nj} yields
\begin{equation*}
    |L_{2N_{j+1}}-L_{N_{j+1}}|<2C_{0}\tilde{q}_{j}^{\sigma_{1}-\zeta\sigma}.
\end{equation*}
Furthermore, the triangle inequality gives
\begin{equation*}
    \begin{split}
        |L_{N_{j+1}}-L_{N_{j}}|&\leqslant |L_{N_{j+1}}+L_{N_{j}}-2L_{2N_{j}}|+ 2|L_{2N_{j}}-L_{N_{j}}|< 10 C_{0}\tilde{q}_{j-1}^{\sigma_{1}-\zeta\sigma}.
    \end{split}
\end{equation*}
Thus, \eqref{Q1}, \eqref{Q2}, and \eqref{Q3} hold for $s=j$.

Consequently, utilizing the telescopic sum $L - L_{N_1} = \sum_{s\geqslant 1}(L_{N_{s+1}}-L_{N_s})$ and the growth condition $\tilde{q}_{s+1}>\tilde{q}_{s}^{\zeta}$ with $\zeta>1$, we deduce
\begin{equation*}
\begin{split}
    |L+L_{N_{0}}-2L_{2N_{0}}|&\leqslant |L_{N_{1}}+L_{N_{0}}-2L_{2N_{0}}| + \sum_{s\geqslant 1} |L_{N_{s+1}}-L_{N_{s}}|\\
    &\leqslant C_{0}\tilde{q}_{0}^{\sigma_{1}-\zeta\sigma} + 10C_{0}\sum_{s\geqslant 1}\tilde{q}_{s-1}^{\sigma_{1}-\zeta\sigma} \leqslant \tilde{q}_{0}^{-c'},
\end{split}
\end{equation*}
where $c'\coloneq \frac{1}{2}(\zeta\sigma-\sigma_{1})>0$, and the final inequality holds for sufficiently large $\tilde{q}_{0}$. This completes the proof.
\end{proof}

\subsection{Proof of Theorem \ref{mainthm}}
Assume that $A, A_{n}\in G^{s}_{\rho}(\mathbb{T},\mathrm{SL}(2,\mathbb{R}))$ and $A_{n}\to A$ as $n\to \infty$. If $L(A)=0$, then the continuity of $L(\cdot)$ at $A$ follows directly from upper semi-continuity:
\begin{equation*}
    0\leqslant \liminf_{n\to\infty} L(A_{n})\leqslant \limsup_{n\to\infty} L(A_{n})\leqslant L(A)=0.
\end{equation*}
Therefore, we may assume $L(A)>100\kappa>0$. 

By Theorem \ref{approx}, for sufficiently large $n$, we simultaneously have
\begin{equation*}
    \begin{split}
        |L(A)+L_{N_{0}}(A)-2L_{2N_{0}}(A)|&<\tilde{q}_{0}^{-c'},\\
        |L(A_{n})+L_{N_{0}}(A_{n})-2L_{2N_{0}}(A_{n})|&<\tilde{q}_{0}^{-c'}.
    \end{split}
\end{equation*}
It follows that
\begin{equation*}
    \begin{split}
        |L(A)-L(A_{n})|&\leqslant |L_{N_{0}}(A)-L_{N_{0}}(A_{n})| +2|L_{2N_{0}}(A)-L_{2N_{0}}(A_{n})|+2\tilde{q}_{0}^{-c'}\\
        &\leqslant C(N_{0}) \|A_{n}-A\|_{C^{0}}+2\tilde{q}_{0}^{-c'}.
    \end{split}
\end{equation*}
Consequently, 
\begin{equation*}
    \limsup_{n\to\infty}|L(A)-L(A_{n})| \leqslant 2\tilde{q}_{0}^{-c'}.
\end{equation*}
Letting $\tilde{q}_{0}\to\infty$ implies the result.
\qed

\section*{Acknowledgments}
X. Wang thanks Prof. Wencai Liu for useful discussion. This work was supported by NSF DMS-2246031
and NSF DMS-2052572.




\bibliography{main}
\end{document}